\newtheorem{theorem}{Theorem}[section]
\newtheorem{lemma}[theorem]{Lemma}
\newtheorem{corollary}[theorem]{Corollary}
\newtheorem{proposition}[theorem]{Proposition}
\newcommand{\bb}[1]{\mathbb{#1}}
\newcommand{\sing}[1]{\operatorname{Sing} \, #1}
\newcommand{\exc}[1]{\operatorname{Exc} \, #1}
\newcommand{\loc}[1]{\operatorname{loc} \, #1}
\newcommand{\cal}[1]{\mathcal{#1}}
\newenvironment{abstracts}{%
  \ifx\maketitle\relax
    \ClassWarning{\@classname}{Abstract should precede
      \protect\maketitle\space in AMS document classes; reported}%
  \fi
  \global\setbox\abstractbox=\vtop \bgroup
    \normalfont\Small
    \list{}{\labelwidth\z@
      \leftmargin3pc \rightmargin\leftmargin
      \listparindent\normalparindent \itemindent\z@
      \parsep\z@ \@plus\p@
      
      \itemsep\medskipamount
    }%
}{%
  \endlist\egroup
  \ifx\@setabstract\relax \@setabstracta \fi
}
\newcommand{\abstractin}[1]{%
  \otherlanguage{#1}%
  \item[\hskip\labelsep\scshape\abstractname.]%
}
\DeclareMathOperator{\Aut}{Aut}
\title{On base point freeness   for rank one foliations}
\author{Paolo Cascini and Calum Spicer} 
\subjclass[2010]{14E30, 37F75}
\address{Department of Mathematics, Imperial College London, 180 Queen's Gate, 
London SW7 2AZ, UK} 
\email{p.cascini@imperial.ac.uk}
\address{Department of Mathematics, King's College London, Strand,
London WC2R 2LS, UK}
\email{calum.spicer@kcl.ac.uk}
\dedicatory{To James M\textsuperscript cKernan on the occasion of his 60th birthday.}
\begin{document}

\begin{abstracts}

\abstractin{english}
We prove the base point free theorem for log canonical foliated pairs of rank one  
on a $\mathbb Q$-factorial projective klt threefold.
Moreover, we show abundance in the case of numerically trivial log canonical foliated  pairs of rank one in any dimension.  
\end{abstracts}

\maketitle

\tableofcontents

\section{Introduction}

In recent years, the Minimal Model Program (MMP) has been extended beyond its classical realm to encompass the birational classification of foliated varieties. Foliations of rank one arise naturally in both dynamics and algebraic geometry, and understanding their positivity properties is a crucial step toward a comprehensive birational theory. 

\medskip

Our first main result establishes a base point free theorem for log canonical, rank-one foliated pairs on threefolds.  This extends the classical Kawamata–Shokurov base point free theorem to the foliated setting (see also \cite[Theorem 1.3]{CS21} for the case of co-rank one foliations over a threefold and \cite[Theorem 1.3]{Li25} for a more general result for rank one foliations over a threefold):

\begin{theorem}
[=Theorem \ref{t_bpf}]
\label{t_main1}
Let $X$ be a normal projective threefold with $\mathbb Q$-factorial klt singularities 
and let $(\cal F,\Delta)$ be a rank one  foliated  pair on $X$ with log canonical singularities. Assume that  $\Delta=A+B$ where $A$ is an ample $\bb Q$-divisor and $B\ge 0$ is a $\bb Q$-divisor. 
Suppose that $K_{\cal F}+\Delta$ is nef.  

Then $K_{\cal F}+\Delta$ is semi-ample.
\end{theorem}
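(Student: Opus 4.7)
The plan is to adapt the classical Kawamata--Shokurov strategy to the rank one foliated setting. The argument naturally breaks into two stages: a \emph{non-vanishing} step, proving that $h^0(X, \lfloor m(K_{\cal F}+\Delta) \rfloor) > 0$ for some $m > 0$, and a \emph{base point freeness} step, which takes the non-vanishing as input and produces, for all $k$ sufficiently large and divisible, sections of $k(K_{\cal F}+\Delta)$ with no common zero. The presence of the ample part $A \le \Delta$ is essential at both stages: once a non-zero section exists, perturbing $\Delta$ by a small multiple of $A$ lets us also assume in the second stage that $K_{\cal F}+\Delta$ is big, so that one can effectively iterate.

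For non-vanishing I would argue in Shokurov style on a suitable log resolution of $(X,\cal F,\Delta)$. The main ingredient should be a Kawamata--Viehweg-type vanishing theorem for rank one foliated pairs, combined with an asymptotic Riemann--Roch estimate for $h^0(X, m(K_{\cal F}+\Delta))$ exploiting the ample divisor $A$ inside $\Delta$. The foliated vanishing statement is the main technical input here; in the rank one case it should be accessible, since the cotangent of the foliation is a line bundle and the geometry of leaves is particularly simple.

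For the second stage I would run the standard tie-breaking argument. Picking $D \in |m(K_{\cal F}+\Delta)|$ produced by non-vanishing and considering the pair $(\cal F, \Delta + tD)$ for the log canonical threshold $t$, I would perturb so as to reduce to the case of a unique minimal non-klt center $W$. Because $X$ is a threefold and $\cal F$ has rank one, $W$ is either a foliation-invariant divisor, an invariant or transverse curve, or a closed point, and in each case a foliated adjunction formula should yield a log canonical structure on $W$. The goal is then to show that for $k$ large and divisible the restriction map
\[
H^0(X, k(K_{\cal F}+\Delta)) \longrightarrow H^0(W, k(K_{\cal F}+\Delta)|_W)
\]
is surjective and that $(K_{\cal F}+\Delta)|_W$ is itself semi-ample. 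Surjectivity should come from the foliated vanishing theorem applied to an appropriate multiplier ideal, while semi-ampleness on $W$ would follow by induction on dimension: the classical Kawamata--Shokurov theorem handles the transverse case, and low-dimensional base point free statements handle the invariant case.

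The main obstacle, already present in the co-rank one setting of \cite{CS21}, is that log canonical centers for rank one foliated pairs are substantially less well behaved than in the klt MMP, and the transverse and invariant cases genuinely require different inputs. Controlling the geometry of the minimal non-klt center $W$ after perturbation---so that the foliated adjunction formula, the vanishing needed for lifting, and the inductive semi-ampleness statement on $W$ all apply simultaneously, and so that transverse and invariant behaviour can be patched into a single argument---is where the bulk of the technical work will lie.
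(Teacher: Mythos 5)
Your proposal hinges on a Kawamata--Viehweg-type vanishing theorem for $K_{\cal F}+\Delta$, and this is precisely the ingredient that does not exist: Kodaira-type vanishing is known to \emph{fail} for foliated canonical divisors (already for rank one foliations on surfaces, e.g.\ foliations induced by fibrations), and this failure is the reason the entire foliated MMP literature (Spicer, Cascini--Spicer, ACSS) avoids the X-method. Without such a vanishing statement, both of your stages collapse: Shokurov-style non-vanishing for $m(K_{\cal F}+\Delta)$ is not known (it is essentially the open foliated non-vanishing problem, not an ``accessible'' technical point), and the lifting of sections from a minimal non-klt centre via multiplier ideals has no cohomological engine behind it. There are further problems downstream --- foliated adjunction to a transverse or non-invariant centre $W$ is delicate, and your reduction to the big case does not address what happens when $K_{\cal F}+\Delta$ is nef of intermediate Iitaka-type behaviour --- but the absence of vanishing is the fatal gap.

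For comparison, the actual argument is contraction-theoretic rather than cohomological. When $K_{\cal F}+\Delta$ is not big, $K_{\cal F}$ is not pseudo-effective, so Miyaoka's theorem gives algebraic integrability, bend-and-break produces an invariant rational curve through a general point on which $K_{\cal F}+\Delta$ vanishes, and semi-ampleness follows from a fibration-type statement (Proposition \ref{p_semi_ample_fibr}): one passes to an ACSS $(*)$-modification, writes $K_{\cal F}+\Delta$ as a pullback from the base, and invokes Prokhorov--Shokurov semi-ampleness of the moduli part. When $K_{\cal F}+\Delta$ is big, one uses the rank one cone theorem to locate $(K_{\cal F}+\Delta)$-trivial, $K_{\cal F}$-negative extremal rays spanned by invariant curves, contracts divisorial loci using the existence of contractions and flips for rank one foliations on threefolds and inducts on the Picard number, shows that $\operatorname{Null}(K_{\cal F}+\Delta)$ contains no surface, and finally contracts the remaining one-dimensional loci (whose normal bundle is anti-ample) by Artin's contractibility theorem in the category of algebraic spaces; Nakai--Moishezon then forces the descended divisor to be ample. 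If you want to pursue a proof along your lines, you would first need a substitute for vanishing, which is exactly what these geometric arguments are designed to circumvent.
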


Our second main theorem addresses the abundance problem in arbitrary dimension when the adjoint class is numerically trivial
 (see \cite[Theorem 1.2]{gongyo13} and the reference therein for the absolute case,   \cite[Theorem 1.7]{CS21} for the case of foliations of co-rank one  over a threefold):

\begin{theorem}[= Theorem \ref{t_main}]
\label{t_main0}
Let $X$ be a normal projective $\mathbb Q$-factorial klt variety and let $(\cal F, \Delta)$
be a rank one  foliated pair on $X$ with log canonical singularities.
Suppose that $K_{\cal F}+\Delta \equiv 0$.  

Then $K_{\cal F}+\Delta \sim_{\bb Q} 0$.
\end{theorem}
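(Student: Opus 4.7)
The strategy is to reduce the theorem to a foliated non-vanishing statement: if one can show that $h^0(X,m(K_{\cal F}+\Delta))>0$ for some $m\ge 1$, then the corresponding effective divisor $D\sim m(K_{\cal F}+\Delta)$ is numerically trivial, so intersecting with any very ample class forces $D=0$, whence $K_{\cal F}+\Delta\sim_{\bb Q} 0$. Thus everything reduces to producing a single section.

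To set up the non-vanishing, I would argue by induction on $\dim X$ and on the dimension of a minimal foliated log canonical centre. First pass to a model $\pi\colon(Y,\cal F_Y,\Delta_Y)\to(X,\cal F,\Delta)$ on which $(\cal F_Y,\Delta_Y)$ is foliated dlt and $\pi^{\ast}(K_{\cal F}+\Delta)=K_{\cal F_Y}+\Delta_Y$. If $\lfloor\Delta_Y\rfloor\ne 0$, pick a minimal foliated log canonical centre $W\subseteq\lfloor\Delta_Y\rfloor$ and perform foliated adjunction, producing a log canonical foliated pair $(\cal F_W,\Delta_W)$ on $W$ with $K_{\cal F_W}+\Delta_W\equiv 0$. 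Since $\dim W<\dim Y$, the inductive hypothesis yields $K_{\cal F_W}+\Delta_W\sim_{\bb Q} 0$; lift the resulting section to $Y$ via a Koll\'ar-type extension theorem, using a foliated log canonical vanishing to kill the obstruction. If instead $\lfloor\Delta_Y\rfloor=0$, so that $(\cal F_Y,\Delta_Y)$ is foliated klt, I would fix an ample $\bb Q$-divisor $A$, apply foliated non-vanishing to the big klt pair $(\cal F_Y,\Delta_Y+tA)$ for small rational $t>0$ to obtain effective representatives $D_t\sim_{\bb Q} m_t(K_{\cal F_Y}+\Delta_Y+tA)$, and then descend $t\to 0$ by a boundedness argument controlling the supports and coefficients of the $D_t$.

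The main obstacle is the extension step from $W$ to $Y$: foliated analogues of Kawamata--Viehweg vanishing are significantly more delicate than their absolute counterparts, since the standard Hodge-theoretic proofs apply to absolute rather than leafwise differentials, and the $\cal F$-invariance of $W$ complicates the restriction exact sequence one would like to use. A secondary obstacle is the descent $t\to 0$ in the klt case, which demands uniform control over the $D_t$ and may require auxiliary input from the structure theory of foliations with small canonical class (along the lines of Loray--Pereira--Touzet and its extensions).
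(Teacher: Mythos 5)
Your opening reduction is fine: a numerically trivial divisor with a nonzero section must vanish, so the statement is equivalent to producing one section of some multiple of $K_{\cal F}+\Delta$. The problem is that every tool you then invoke to produce that section is either unavailable or actually false in the foliated setting, and in arbitrary dimension. There is no foliated Kawamata--Viehweg (or Koll\'ar injectivity/extension) theorem: vanishing theorems for $K_{\cal F}$ are known to fail already for rank one foliations on surfaces, and this failure is one of the principal reasons the foliated MMP cannot simply copy the classical arguments; so the lifting step from a minimal lc centre $W$ to $Y$ has no engine behind it. Likewise, foliated dlt modifications with crepant pullback and a usable foliated adjunction to minimal lc centres are not available for rank one foliated pairs in arbitrary dimension (the MMP-type results quoted in the paper, e.g.\ \cite[Theorem 8.8]{CS20}, are threefold statements), and ``foliated non-vanishing for big foliated klt pairs'' in arbitrary dimension is an open problem at least as hard as the theorem you are trying to prove, so the klt branch of your induction is circular. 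Finally, the limit $t\to 0$ of sections of $m_t(K_{\cal F_Y}+\Delta_Y+tA)$ requires uniform control of $m_t$ and of the divisors $D_t$ which nothing in your setup provides; in the classical numerically trivial case this is exactly the point where one switches to a completely different argument (Albanese/Hodge-theoretic), not a boundedness argument.

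For contrast, the paper never attempts an extension/vanishing route. It splits into two cases. If $\Delta\neq 0$, or $\Delta=0$ and $\cal F$ is not canonical, then $K_{\cal F}$ is not pseudo-effective (Miyaoka, resp.\ \cite[Corollary 3.8]{lpt18}), so $\cal F$ is algebraically integrable, and the statement follows from Proposition \ref{p_semi_ample_fibr}: pass to an ACSS $(*)$-modification, write $K_{\overline{\cal F}}+\overline\Delta+E\sim_{\mathbb Q} q^*M_Z$ by flatness of the fibration in curves, and conclude semi-ampleness of the moduli part $M_Z$ from BP stability and Prokhorov--Shokurov. If $\Delta=0$ and $\cal F$ is canonical, the argument goes through the Albanese map: after Stein factorisation $a\colon X\to Z$, one has $\mathcal O_X(mK_{\cal F})=a^*L$; the transverse case gives a section directly, while in the tangent case one takes a relative index one cover and a functorial resolution, analyses whether $\sing\cal F$ dominates $Z$, reduces (via the structure theory of smooth vector fields with trivial canonical class on the fibres) to a family of abelian varieties over a curve after semi-stable reduction, and then shows $L$ is torsion by a Hodge-theoretic argument: the map $a_*\Omega^1_{X/Z}(\log D)\to L$, semi-positivity of the curvature of the Hodge metric, $\deg L=0$ forcing curvature and local exponents to vanish, so $L|_U$ is a local subsystem of $R^1a_*\mathbb C_{X_U}$, and Deligne's theorem makes a power trivial. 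In short, the heavy lifting is done by the canonical bundle formula for algebraically integrable foliations and by variation-of-Hodge-structure arguments, precisely because the vanishing-and-extension machinery you propose does not exist for foliations.
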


Note that Theorem \ref{t_main1} and a version of Theorem \ref{t_main0} in dimension three appeared  in an earlier version of our paper \cite{CS20}. 
A different proof of Theorem \ref{t_main0} can also be found  in \cite[Theorem 5.1]{druel2024numericallyflatfoliationsholomorphic}. 

\subsection{Acknowledgements}
The first author is partially supported by a Simons collaboration grant. 
The second author is partially funded by EPSRC. We would like to thank Fabio Bernasconi,  Mengchu Li,  Jihao Liu and Jorge Pereira
for many useful discussions. We would also like to thank the referee for careful reading the paper and for
several useful comments.

\section{Preliminary Results}

\subsection{Notations}
We work over the field of complex numbers $\mathbb C$. 
We refer to \cite{KM98} for the classical definitions of singularities that appear in the minimal model program.

Given a normal variety $X$, we denote by $\Omega^1_X$ its sheaf of K\"ahler differentials and
by $T_X:=(\Omega^1_X)^*$ its tangent sheaf. 
A {\bf foliation of rank one} on a normal variety $X$ is a rank one coherent subsheaf $T_{\cal F}\subset T_X$ such that 
$T_{\cal F}$ is saturated in $T_X$.
The {\bf canonical divisor} of $\cal F$ is a divisor $K_{\cal F}$ such that $\mathcal O_X(-K_{\cal F})\simeq  T_{\mathcal F}$.  
A {\bf rank one foliated  pair} $(\cal F, \Delta)$ is a pair of a foliation $\cal F$ of rank one and a $\bb Q$-divisor $\Delta\ge 0$
such that $K_{\cal F}+\Delta$ is $\bb Q$-Cartier. 
We refer to \cite[Sections 2.1, 2.2 and 2.3]{CS20} for the classical notions for foliations, such as their singularities and invariant subvarieties.

\begin{lemma}\label{l_cone}
Let $X$ be a normal projective variety and let $(\cal F,\Delta)$ be a rank one  foliated  pair on $X$ with log canonical singularities and such that $K_{\cal F}$ is $\mathbb Q$-Cartier. Assume that  $\Delta=A+B$ where $A$ is an ample $\bb Q$-divisor and $B\ge 0$ is a $\bb Q$-divisor. Assume that $K_{\cal F}+\Delta$ is not nef but there exists a $\bb Q$-divisor $H$ such that $K_{\cal F}+\Delta+H$ is nef. 
Let 
\[
\lambda \coloneq \inf\{t>0\mid K_{\cal F}+\Delta+tH\text{ is nef }\}.
\]

Then there exists a $(K_{\cal F}+\Delta)$-negative extremal ray $R=\mathbb R_+[C]$ such that 
$C$ is $\cal F$-invariant and 
$(K_{\cal F}+\Delta+\lambda H)\cdot C=0$. In particular, $\lambda\in \mathbb Q$. 
\end{lemma}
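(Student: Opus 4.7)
The plan is to adapt the classical cone-theorem rationality argument of Kawamata and Shokurov to rank one foliated log canonical pairs. The essential input is a cone theorem for such pairs: namely, that the $(K_{\cal F}+\Delta)$-negative part of $\overline{NE}(X)$ is generated by countably many extremal rays $R_i = \mathbb R_+[C_i]$, each spanned by an $\cal F$-invariant curve $C_i$, together with the accumulation property that the $R_i$ cluster only towards the hyperplane $\{K_{\cal F}+\Delta = 0\}$ (equivalently, only finitely many $R_i$ lie in any halfspace bounded away from this hyperplane). For rank one foliations, such a cone theorem should follow from the authors' previous work \cite{CS20} together with foliated bend-and-break applied to leaves.

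Granting the cone theorem, the argument runs as follows. Since $K_{\cal F}+\Delta+H$ is nef, for every $(K_{\cal F}+\Delta)$-negative extremal ray $R_i$ we have
\[
H \cdot C_i \;\geq\; -(K_{\cal F}+\Delta) \cdot C_i \;>\; 0,
\]
so we may define
\[
\lambda_i \coloneq -\frac{(K_{\cal F}+\Delta) \cdot C_i}{H \cdot C_i} \;\in\; (0,1].
\]
By Kleiman's criterion applied to the cone decomposition, $K_{\cal F}+\Delta+tH$ is nef if and only if $t \geq \lambda_i$ for every $i$; hence $\lambda = \sup_i \lambda_i$. To see this supremum is attained, pick any $\epsilon \in (0,\lambda)$. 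The rays with $\lambda_i \geq \lambda - \epsilon$ lie in the closed halfspace $\{(K_{\cal F}+\Delta+(\lambda-\epsilon)H) \leq 0\}$; since $K_{\cal F}+\Delta+\lambda H$ is nef (the nef cone is closed) while $K_{\cal F}+\Delta+(\lambda-\epsilon)H$ is not, the accumulation property of the cone theorem forces only finitely many $R_i$ to lie in this halfspace. Thus $\lambda = \lambda_i$ for some $i$, and the corresponding $R_i$ is the desired ray. Rationality of $\lambda$ is then immediate, since $K_{\cal F}+\Delta$ is $\mathbb Q$-Cartier, $H$ is a $\mathbb Q$-divisor, and both intersection numbers are therefore rational.

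The main obstacle is securing the cone theorem in the precise form required: in particular, the statement that every $(K_{\cal F}+\Delta)$-negative extremal ray is generated by an $\cal F$-invariant rational curve, together with a foliated analogue of the length bound $(K_X+\Delta)\cdot C \geq -2\dim X$ sufficient to guarantee local discreteness of the negative extremal rays away from the hyperplane $\{K_{\cal F}+\Delta = 0\}$. Once these ingredients are in place, the remainder of the argument is a transcription of Kawamata's rationality proof, with the ampleness of $A$ (a component of $\Delta$) playing the role it does in the classical setting to bound the length of extremal rays.
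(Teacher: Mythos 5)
Your argument is correct and is essentially the paper's own route: the paper proves this lemma by citing the cone theorem for rank one foliations (\cite[Theorem 1.2]{CS23b}), which supplies exactly the ingredients you flag as the ``main obstacle'' (countably many $(K_{\cal F}+\Delta)$-negative extremal rays spanned by $\cal F$-invariant rational curves with a length bound, hence discreteness away from the trivial hyperplane), and then runs the same Kawamata--Shokurov-style rationality argument, following \cite[Lemma 9.2]{CS21}. So your transcription of the classical argument, granted that cone theorem, matches the intended proof.
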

\begin{proof}
The proof is the same as the proof of  \cite[Lemma 9.2]{CS21}, as a consequence of the cone theorem for rank one foliations (cf. \cite[Theorem 1.2] {CS23b}).
\end{proof}

The following result will be used  in the proof of both Theorem \ref{t_main1} and Theorem \ref{t_main0}: 

\begin{proposition}
\label{p_semi_ample_fibr}
Let $X$ be a  normal projective variety 
and let $(\cal F,\Delta)$ be a rank one  foliated  pair on $X$ with log canonical singularities and such that 
$\cal F$ is algebraically integrable. 
 Assume that   $H\coloneq K_{\cal F}+\Delta$ is nef and that there exists an $\cal F$-invariant curve 
$\xi$  passing through a general point of $X$  such that $H\cdot \xi=0$. 

Then $H$ is semi-ample.
\end{proposition}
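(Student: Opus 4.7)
My plan is to use the algebraic integrability of $\cal F$ to reduce the problem to a semi-ampleness question on the base of the family of leaves, and then to apply the canonical bundle formula of Ambro--Kawamata--Fujino for lc-trivial fibrations of relative dimension one.

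Since $\cal F$ has rank one and is algebraically integrable, its leaves are algebraic curves, and they fit into a dominant rational map $X \dashrightarrow Y$ with $\dim Y = \dim X - 1$. After passing to a suitable birational model of $X$ and pulling back everything (adjusting $\Delta$ to absorb the discrepancy between the foliated and the ordinary canonical divisor, which is effective and supported on special fibres), I may assume $\pi \colon X \to Y$ is a projective morphism with connected one-dimensional fibres and that $\cal F$ is the foliation induced by $\pi$. By the hypothesis on $\xi$, the curve $\xi$ is (a multiple of) a general fibre, so $H \cdot F = 0$ for a general fibre $F$. On such a smooth fibre the foliated adjunction gives $K_{\cal F}|_F = K_F$, and $(F, \Delta|_F)$ is a log canonical pair with $\deg(K_F + \Delta|_F) = H \cdot F = 0$. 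The elementary classification of log canonical pairs $(C,B)$ on a curve with $K_C + B \equiv 0$ then forces $H|_F \sim_{\bb Q} 0$, so $H$ is $\bb Q$-linearly trivial along a general leaf.

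To descend $H$ to $Y$, I would rewrite $K_{\cal F} + \Delta = K_X + \Delta' + \pi^* L$ for a suitable effective boundary $\Delta'$ and divisor $L$, using that $K_{X/Y} - K_{\cal F}$ is an effective divisor concentrated on special fibres. Applying the Ambro--Kawamata--Fujino canonical bundle formula to the resulting lc-trivial fibration $(X, \Delta') \to Y$ of relative dimension one yields $H \sim_{\bb Q} \pi^*(K_Y + B_Y + M_Y)$, with $(Y, B_Y)$ log canonical and $M_Y$ nef. In relative dimension one, $M_Y$ is known to be semi-ample thanks to the Fujino--Mori refinement of the moduli theory of stable pointed curves.

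The main obstacle is the final step: establishing semi-ampleness of the adjoint divisor $K_Y + B_Y + M_Y$ on $Y$. This is a genuine abundance statement for a generalized log canonical pair, and it is where the rest of the paper's machinery must be used, e.g.\ by invoking abundance in the dimension of $Y$ or by looping back via the inductive structure of the proofs of the main theorems. Once semi-ampleness on $Y$ is in hand, pulling back along $\pi$ and descending along the birational modification of $X$ are routine, and we recover semi-ampleness of $H$ on the original $X$.
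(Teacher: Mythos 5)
There is a genuine gap, and it sits exactly where you locate your ``main obstacle''. Your descent formula $H \sim_{\bb Q} \pi^*(K_Y+B_Y+M_Y)$ is not the right one: since $K_{\cal F}$ agrees with the \emph{relative} canonical divisor $K_{X/Y}$ up to an effective vertical correction, the divisor $K_Y$ (and, after the right birational preparation, the whole discriminant part $B_Y$ as well) cancels out of the formula when you do the bookkeeping with your twist $\pi^*L$ included. The paper makes this precise by first passing to a property $(*)$ modification in the sense of Ambro--Cascini--Shokurov--Spicer: on the model $\overline X \to Z$ one has $K_{\overline{\cal F}}+\overline\Delta+E = p^*(K_{\cal F}+\Delta)$ and, adding the canonically associated vertical divisor $G$, the pair $(\overline X,\Gamma)$ with $\Gamma=\overline\Delta+E+G$ satisfies $K_{\overline X}+\Gamma\sim_{q,\bb Q}0$ and is BP stable; the upshot is that $p^*H \sim_{\bb Q} q^*M_Z$ where $M_Z$ is \emph{only the moduli part} of the lc-trivial fibration. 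Semi-ampleness of the moduli part in relative dimension one is a theorem of Prokhorov--Shokurov (\cite[Theorem 8.1]{PS09}, applicable here by the BP stability statement of \cite[Theorem 4.3]{ACSS}), so no abundance statement on the base is needed at all.

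By contrast, the step you propose — proving semi-ampleness of $K_Y+B_Y+M_Y$ for a generalized log canonical pair on a base of dimension $\dim X-1$ — is not available: the proposition is invoked in the paper in arbitrary dimension (in the proof of Theorem \ref{t_main}), where generalized abundance on the base is an open problem, and the inductive structure of the paper provides no such statement (Theorem \ref{t_bpf} is a threefold statement about foliated pairs, not about generalized pairs on the base). So as written the argument cannot be closed; the fix is precisely to recognize that the foliated adjoint divisor descends to the moduli part alone, which is the content of the paper's proof.
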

\begin{proof}
Since $(\cal F,\Delta)$ is log canonical, \cite[Lemma 2.5]{CS23b} implies that no component of $\Delta$ is $\cal F$-invariant. 
Let $p\colon \overline{X}\to X$ be a $(*)$ modification as in \cite[Theorem 3.10]{ACSS} so that, in particular,  $\overline{X}$ is klt and $\mathbb Q$-factorial and if $\overline{\cal F} \coloneqq p^{-1}\cal F$  then 
$\overline{\cal F}$  is induced by an equidimensional morphism $q\colon \overline{X}\to Z$ onto a smooth projective variety $Z$ of dimension $\dim X-1$. 
If $\overline \xi$ is the strict transform of $\xi$ in $\overline X$ then $\overline \xi$ is a fibre of $q$.
 In particular, 
 $p^*H$ is numerically trivial over $Z$. 
Moreover, 
if $\overline {\Delta}$ is the strict transform of $\Delta$ on $\overline X$, then we may  write 
\[
K_{\overline {\cal F}}+\overline\Delta +E = p^*(K_{\cal F}+\Delta)
\]
where $E$ is the  sum of all the $p$-exceptional divisors which are not $\overline{\cal F}$-invariant. 
In particular, $(\overline{\cal F},\overline\Delta+E)$ is log canonical. 

Let $G$ be the divisor associated to $(\overline{\cal F},\overline\Delta+E)$ (cf. \cite[Definition 3.5]{ACSS}) and let $\Gamma\coloneq \overline\Delta+E+G$. 
By \cite[Proposition 3.6]{ACSS}, we have that
\[
K_{\overline X}+\Gamma\sim_{f,\mathbb Q}
K_{\overline{\cal F}}+\overline\Delta+E \sim_{f,\mathbb Q} 0.
\]
Since $f$ is flat of relative dimension one, it follows that 
\[
K_{\overline {\cal F}}+\overline \Delta+E\sim_{\mathbb Q}q^*M_Z
\]
for some $\mathbb Q$-divisor $M_Z$ on  $Z$ and \cite[Proposition 3.6]{ACSS} implies that $M_Z$ is the moduli part of $q$ with respect to $(\overline X,\Gamma)$. 
By \cite[Theorem 4.3]{ACSS}, we have that $(\overline X/Z,\Gamma)$ is BP stable over $Z$ (cf. \cite[Definition 2.5]{ACSS}). 
Thus, \cite[Theorem 8.1]{PS09} implies that $M_{Z}$ is semi-ample and the result follows. 
\end{proof}

\section{Base point free theorem in dimension three}

The goal of this section is to prove Theorem \ref{t_main1}. 
The following two results are a slight generalisation of \cite[Lemma 5.7]{CS20} and \cite[Lemma 5.15]{CS20} respectively: 

\begin{lemma}\label{l_sing}
Let $X$ be a  normal variety and let $\cal F$ be a rank one foliation with canonical singularities. 
Let $C$ be an $\cal F$-invariant curve such that $K_{\cal F}\cdot C<0$. Assume that  $C$ does not move in a family of $\cal F$-invariant curves covering $X$.

Then there exists exactly one closed point $P\in C$ such that  $\mathcal F$ is not
terminal at $P$. Moreover, there exists at most one closed point $Q\in C\setminus \{P\}$ such that $K_{\cal F}$ is not Cartier at $Q$. 
\end{lemma}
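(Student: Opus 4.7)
The plan is to follow the strategy of \cite[Lemma 5.7]{CS20}, which rests on an adjunction-type formula relating $-K_{\cal F}\cdot C$ to the Euler characteristic of the normalisation $\widetilde C$ of $C$ together with a sum of non-negative local invariants of $(\cal F,C)$ concentrated at the bad points of $C$.

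Concretely, I would take a birational morphism $\pi\colon Y\to X$ that simultaneously resolves $X$, $\cal F$ and $C$, so that on $Y$ the strict transform $C'\subset Y$ of $C$ is smooth, $C'\to C$ realises the normalisation, and the induced foliation $\cal F_Y=\pi^{-1}\cal F$ has only Seidenberg-type reduced singularities along $C'$. Combining adjunction for the invariant curve $C'\subset Y$ with the discrepancy formula $K_{\cal F_Y}=\pi^*K_{\cal F}+\sum_i a_i E_i$ (all $a_i\ge 0$ by the canonical hypothesis) and the projection formula, one should arrive at an expression of the shape
\[
-K_{\cal F}\cdot C \;=\; 2-2g(\widetilde C)-\sum_{P\in C}\mu_P,
\]
where each $\mu_P\ge 0$ is a local contribution depending on the singularities of $\cal F$ and $X$ and on the Cartier-index of $K_{\cal F}$ at $P$, vanishing whenever $\cal F$ is terminal, $X$ is smooth and $K_{\cal F}$ is Cartier at $P$. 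Since $-K_{\cal F}\cdot C>0$, the formula immediately forces $g(\widetilde C)=0$ and $\sum_P \mu_P<2$.

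The heart of the proof is then a local analysis showing that $\mu_P\ge 1$ at every non-terminal point of $\cal F$ on $C$, and also $\mu_P\ge 1$ at every terminal point of $C$ where $K_{\cal F}$ is not Cartier. Combined with the strict bound $\sum_P\mu_P<2$, this directly yields at most one non-terminal point $P\in C$ and at most one further point $Q\in C\setminus\{P\}$ at which $K_{\cal F}$ is not Cartier, establishing the uniqueness content of the statement.

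For the existence of at least one non-terminal point I would argue by contradiction: if $\cal F$ were terminal along all of $C$, a local analysis would force $\cal F$ to be smooth along $C$, so that $C$ would be a compact leaf of $\cal F$; standard deformation theory for smooth foliations then produces a non-trivial family of $\cal F$-invariant algebraic curves sweeping out a neighbourhood of $C$, which by properness of the Hilbert scheme extends to a covering family of $X$, contradicting the hypothesis. The main technical obstacle is the local analysis of the quantities $\mu_P$: one must carefully quantify the minimum contribution of a canonical-but-not-terminal rank one foliation singularity, and likewise of a terminal point at which the Cartier index of $K_{\cal F}$ is non-trivial. This is the direct extension of the analysis of \cite[Lemma 5.7]{CS20}, which treated the terminal/Cartier case, to the canonical and possibly non-Cartier setting allowed here.
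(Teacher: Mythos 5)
Your overall architecture---an adjunction-type formula along the invariant curve with non-negative local contributions, plus a ``terminal along $C$ implies $C$ moves'' argument for the existence of $P$---is exactly the content of the two results the paper simply quotes: \cite[Proposition 2.13]{CS20} for the counting and \cite[Proposition 3.3]{CS20} for the existence, after first noting via \cite[\S 4.1]{bm16} and $K_{\cal F}\cdot C<0$ that $C\not\subset\sing\cal F$, so that $\cal F$ is terminal and $K_{\cal F}$ is Cartier at a general point of $C$. The problem is that the two local claims you yourself identify as the heart of the proof are false as stated, and they fail precisely at the singularities the lemma is designed to accommodate. At a point where $\cal F$ is terminal but $K_{\cal F}$ is not Cartier the contribution $\mu_Q$ is not $\ge 1$: it is a fraction strictly less than $1$ (equal to $1-\tfrac{1}{r}$ in the basic cyclic quotient model of index $r$). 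Concretely, take the foliation by horizontal fibres on $\mathbb P^1\times\mathbb P^1$ and quotient by the involution acting by $-1$ on each factor; the image $C'$ of a horizontal fibre through two fixed points is invariant, passes through two points where $\cal F$ is terminal and $K_{\cal F}$ has index $2$, and satisfies $-K_{\cal F}\cdot C'=1=2-\tfrac12-\tfrac12$. With your claimed bound $\mu_Q\ge 1$, your budget argument would ``prove'' that no non-Cartier point other than $P$ can occur at all, which is stronger than the lemma and not what is true (the statement, and its use in Lemma \ref{l_irreducible}, explicitly allows such a $Q$); the argument can be repaired with the correct bound $\mu_Q\ge\tfrac12$, but establishing that bound, together with $\mu_P\ge 1$ at canonical non-terminal points, is exactly the unproved local analysis, i.e.\ the content of \cite[Proposition 2.13]{CS20}.

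The existence step has the same defect: ``terminal along $C$ forces $\cal F$ smooth along $C$'' is false for rank one foliations, since cyclic quotients of a nonsingular foliated point are terminal (in the index-$2$ model above the relevant exceptional divisor has foliated discrepancy $\tfrac12$) while neither $X$ nor $\cal F$ is smooth there. The correct statement, which is what the paper invokes as \cite[Proposition 3.3]{CS20}, is the Bogomolov--McQuillan-type result that if $\cal F$ is terminal at every point of an invariant curve with $K_{\cal F}\cdot C<0$ then $C$ moves in a family of invariant curves covering $X$; this does not pass through literal smoothness, and its proof (foliated bend and break / deformation along the leaf, as in \cite{bm16}) is the genuinely nontrivial input that your sketch asserts rather than proves. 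In short: the strategy is the right one and mirrors the machinery behind the citations, but the quantitative local inequalities and the smoothness claim must be replaced by the correct statements, and that is where essentially all of the work in this lemma lives.
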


\begin{proof}
By \cite[\S 4.1]{bm16} and since $K_{\cal F}\cdot C<0$ we have  that $C$ is not contained in $\sing \cal F$ and, therefore, $\cal F$ is terminal at a general closed point of $C$. 
By \cite[Proposition 3.3]{CS20}, there exists a closed point $P\in C$ such that $\cal F$ is not terminal at $P$. By definition of invariance with respect to $\cal F$, we have  that $K_{\cal F}$ is Cartier at a general point of $C$. Since $K_{\cal F}\cdot C<0$,  \cite[Proposition 2.13]{CS20} implies our claims. 
\end{proof}

\begin{lemma}\label{l_irreducible}
Let $X$ be a  projective threefold with $\mathbb Q$-factorial klt singularities and let $\cal F$ be a rank one foliation with canonical singularities. Let $C_1,C_2$ be  $\cal F$-invariant  curves on $X$ such that $C_1\cap C_2\neq \emptyset$ and such that $K_{\cal F}\cdot C_i<0$, for $i=1,2$. Assume that $C_1$ spans an extremal ray $R:=\mathbb R_+[C]$ of $\overline {\rm NE}(X)$ such that $\loc (R)$ is one dimensional and $C_2$ is not contained in $\loc (R)$. 

Then for a general point $x\in X$ there exist an $\mathcal F$-invariant curve $\xi_x$ in $X$ passing through $x$ and rational numbers $a,b\ge 0$ such that $[a C_1+b C_2]=[\xi_x]$ in $\overline{\rm NE}(X)$.
\end{lemma}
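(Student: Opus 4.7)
The plan is to produce a covering family of $\cal F$-invariant curves on $X$ whose general member has numerical class of the form $a[C_1]+b[C_2]$.

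If $C_2$ already moves in a covering family of $\cal F$-invariant curves, then setting $a=0$ and taking $\xi_x$ to be the member through $x$ finishes the proof. So I assume $C_2$ does not move in such a family, and the one-dimensionality of $\loc(R)$ forces the same for $C_1$. Lemma~\ref{l_sing} then gives exactly one non-terminal point of $\cal F$ on each $C_i$, and since the local leaf through a smooth foliation point is unique, the intersection $P\in C_1\cap C_2$ must be a singular point of $\cal F$, and in particular one of the non-terminal points on each $C_i$.

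Next, since $R$ is $K_{\cal F}$-negative (as $K_{\cal F}\cdot C_1<0$), the cone theorem for rank one foliations from \cite{CS23b} provides a small birational contraction $\phi\colon X\to Y$ of $R$, the smallness being a consequence of $\loc(R)$ being one-dimensional. Since the contracted locus is $\cal F$-invariant, $\cal F$ pushes forward to a rank one foliation $\cal G$ on $Y$ with $\phi^*K_{\cal G}=K_{\cal F}$. The image $D_2:=\phi(C_2)$ is $\cal G$-invariant, passes through the isolated singular point $\phi(P)$, and satisfies $K_{\cal G}\cdot D_2<0$.

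The core step is then to deform $D_2$ inside the locus of $\cal G$-invariant curves into a family covering $Y$. A dimension bound for the Hilbert scheme of invariant curves, combined with $K_{\cal G}\cdot D_2<0$ and the local analysis of the canonical foliation singularity of $\cal G$ at $\phi(P)$ (where the existence of the invariant curve $D_2$ with negative foliated canonical degree is the key nondegeneracy input), should force a positive-dimensional family of $\cal G$-invariant deformations of $D_2$ through $\phi(P)$, which by normality of $Y$ sweeps out a neighborhood and hence all of $Y$. Pulling back by $\phi$ yields a covering family of $\cal F$-invariant curves in $X$, and degenerating to the central fibre, which must contain $C_2$ together with a multiple of $C_1$ coming from the contracted locus, shows that the general member has class $k[C_1]+[C_2]$ for some $k\ge 0$, as required.

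The main obstacle is executing the deformation step while preserving foliation invariance: standard bend-and-break produces families of rational curves but does not automatically respect invariance. I would handle this by working on the subscheme of the Hilbert scheme cut out by the invariance condition, using the $K_{\cal G}$-negativity to guarantee a nontrivial tangent direction and the local description of $\cal G$ at $\phi(P)$ to ensure the infinitesimal deformations integrate and move $D_2$ off its initial position rather than collapsing back to a multiple of $\loc(R)$.
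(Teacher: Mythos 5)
There is a genuine gap, and it occurs exactly where your argument diverges from the paper's. The paper does not contract $R$ to a space $Y$ and deform there; it passes to the \emph{flip} $\phi\colon X\dashrightarrow X'$ of $R$ (whose existence is \cite[Theorem 8.8]{CS20}). Your route through the small contraction target $Y$ breaks down at the outset: since $\loc(R)$ is one dimensional, the contraction is small, so on $Y$ the pushed-forward foliated canonical class is not $\mathbb Q$-Cartier. Indeed, if $K_{\cal G}$ were $\mathbb Q$-Cartier with $\phi^*K_{\cal G}=K_{\cal F}$, then $K_{\cal F}\cdot C_1=0$, contradicting $K_{\cal F}\cdot C_1<0$. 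Hence the statements ``$\phi^*K_{\cal G}=K_{\cal F}$'' and ``$K_{\cal G}\cdot D_2<0$'' are not even well posed, and (as in the proof of Theorem \ref{t_bpf}) the small contraction may in any case only exist in the category of algebraic spaces. The whole point of flipping is to return to a $\mathbb Q$-factorial model on which $K_{\cal F'}$ is $\mathbb Q$-Cartier and, crucially, on which the negativity lemma \emph{improves} the discrepancies over the flipped locus: since the unique non-terminal point of $\cal F$ on $C_2$ lies in $C_1\cap C_2\subset\loc(R)$, after the flip $\cal F'$ is terminal at every point of the strict transform $C_2'$, with at most two non-Cartier points of $K_{\cal F'}$ on it; then \cite[Proposition 2.13]{CS20} gives $K_{\cal F'}\cdot C_2'<0$ and \cite[Proposition 3.3]{CS20} gives that $C_2'$ moves in a covering family.

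Your ``core step'' is precisely this last moving statement, but in your setting it is both unproved and inapplicable: the image $D_2$ still passes through the bad point $\phi(P)$, so the mechanism that makes invariant curves move (terminality of the foliation along the curve, away from the non-terminal locus) is not available, and the sketched Hilbert-scheme/dimension-count argument ``cut out by the invariance condition'' is exactly the hard content you would need to supply from scratch — you acknowledge as much in your final paragraph. Standard bend-and-break does not preserve invariance, and no dimension bound is given that would force a positive-dimensional invariant deformation through a singular point of the foliation on a non-$\mathbb Q$-factorial space. The correct fix is the paper's: flip, use the negativity lemma to clear the non-terminal point off $C_2'$, quote \cite[Propositions 2.13 and 3.3]{CS20}, and then compare classes under the flip (a general member of the covering family on $X'$ avoids the flipped locus, and its strict transform in $X$ has class of the form $a[C_1]+b[C_2]$).
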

\begin{proof} 
 By Lemma \ref{l_sing}, we may assume that there exists exactly one closed point $P\in C_2$ such that  $\mathcal F$ is not
terminal at $P$. Moreover, there exists at most one closed point $Q\in C_2\setminus \{P\}$ such that $K_{\cal F}$ is not Cartier at $Q$. Note that, since $C_1\cap C_2$ is $\cal F$-invariant, it follows that $\cal F$ is terminal at every closed point of $C_2$ which is not contained in $C_1$. 

Let $\phi\colon X\dashrightarrow X'$ be the flip assiciated to $R$ and whose existence is guaranteed by 
 \cite[Theorem 8.8]{CS20}. Let $\cal F'\coloneq\phi_*\cal F$ and let $C'_2$  be the strict transform of $C_2$ in $X'$. By the negativity lemma (e.g. see \cite[Lemma 2.7]{CS20}) it follows that $\cal F'$ is terminal at any closed point of  $C'_2$ and that there are at most two closed points in $C'_2$ along which $K_{\cal F'}$ is not Cartier. Thus,  \cite[Proposition 2.13]{CS20} implies that $K_{\cal F'}\cdot C'_2<0$ and \cite[Proposition 3.3]{CS20} implies that $C'_2$ moves in a family of curves covering $X'$. Therefore, our claim follows. 
\end{proof}

\begin{theorem}\label{t_bpf}
Let $X$ be a  projective threefold with $\mathbb Q$-factorial klt singularities 
and let $(\cal F,\Delta)$ be a rank one  foliated  pair on $X$ with log canonical singularities. Assume that  $\Delta=A+B$ where $A$ is an ample $\bb Q$-divisor and $B\ge 0$ is a $\bb Q$-divisor. 
Suppose that $K_{\cal F}+\Delta$ is nef.  

Then $K_{\cal F}+\Delta$ is semi-ample.
\end{theorem}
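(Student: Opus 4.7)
The plan is to prove semi-ampleness of $H := K_{\mathcal F}+\Delta$ by reducing to either Theorem \ref{t_main0} (the numerically trivial case) or Proposition \ref{p_semi_ample_fibr} (the fibration case), via cone-theoretic detection of $\mathcal F$-invariant $H$-trivial curves and the foliated MMP.

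\medskip

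If $H \equiv 0$ then Theorem \ref{t_main0} gives $H \sim_{\mathbb{Q}} 0$, which is semi-ample, so assume $H$ is nef and not numerically trivial. The first step is to detect an $\mathcal F$-invariant curve $C$ spanning an $H$-trivial extremal ray of $\overline{NE}(X)$. This is done by a rationality argument: using the ample component $A$ to perturb, I would define a threshold
\[
\lambda := \sup\{\, t \in [0,1] : K_{\mathcal F} + B + (1-t)A \text{ is nef}\,\},
\]
and, for small $\epsilon > 0$, apply Lemma \ref{l_cone} to the pair $(\mathcal F, B+(1-\lambda-\epsilon)A)$ with perturbation $\epsilon A$ to obtain an $\mathcal F$-invariant curve $C$ spanning an extremal ray on the boundary. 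The rationality conclusion of Lemma \ref{l_cone} makes such $C$ genuine, and by an approximation argument, the slack provided by $A$ can be tuned so that the ray is actually trivial against $H$ itself.

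\medskip

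The second step is to enhance $C$ into a covering family of $\mathcal F$-invariant $H$-trivial curves, so that the hypotheses of Proposition \ref{p_semi_ample_fibr} are met. If $C$ already moves in a covering family then $\mathcal F$ is algebraically integrable and one concludes directly. Otherwise, by the foliated MMP \cite[Theorem 8.8]{CS20}, the extremal ray $\mathbb R_+[C]$ admits a divisorial contraction or flip $X \dashrightarrow X'$; Lemmas \ref{l_sing} and \ref{l_irreducible} then control the $\mathcal F$-singularities along the extremal curves and allow one to combine intersecting $\mathcal F$-invariant extremal curves into a covering family on the birational modification. Iterating, one reaches either a Mori fibre structure $X' \to Z$ on which the transformed foliation $\mathcal F'$ is algebraically integrable with the transformed $H'$ numerically trivial on the fibres---so Proposition \ref{p_semi_ample_fibr} applies and semi-ampleness descends to $X$---or a numerically trivial configuration handled by Theorem \ref{t_main0}.

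\medskip

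The main obstacle is the first step: Lemma \ref{l_cone} directly gives only a curve trivial against a perturbation of $H$, so aligning this boundary with the zero locus of $H$ itself requires either an approximation argument or a more delicate MMP-type reduction, and a naive computation shows that extremal rays found by the basic threshold construction are $H$-positive rather than $H$-trivial. A secondary difficulty is tracking the log canonical condition on $(\mathcal F,\Delta)$ and the ample--effective decomposition $\Delta = A + B$ (with $A$ remaining ample) through each contraction or flip of the foliated MMP, and verifying termination in finite time; here the $\mathbb{Q}$-factorial klt hypothesis on $X$ is essential, both for running the MMP and for the singularity analysis of Lemmas \ref{l_sing}--\ref{l_irreducible}.
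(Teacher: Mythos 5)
There is a genuine gap, on two levels. First, the step you yourself flag as the ``main obstacle'' is in fact easy to close, but your threshold computation is set up incorrectly: apply Lemma \ref{l_cone} to the pair $(\cal F,\tfrac12A+B)$ with perturbation $\tfrac12A$. If the nef threshold is $<1$, then $K_{\cal F}+\Delta$ is the sum of a nef divisor and a positive multiple of $A$, hence ample, and you are done; otherwise the threshold equals $1$ and the lemma produces a $(K_{\cal F}+B)$-negative, $\cal F$-invariant extremal ray $R=\mathbb R_+[C]$ with $(K_{\cal F}+\Delta)\cdot C=0$ exactly. So the ray is never ``$H$-positive''; the dichotomy is ample versus $H$-trivial ray.

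Second, and more seriously, your global strategy cannot work when $H:=K_{\cal F}+\Delta$ is big, which is the heart of the theorem. If $H$ is big and nef, no covering family of $H$-trivial curves exists (such curves lie in $\Null{H}\neq X$), $\cal F$ need not be algebraically integrable, and an MMP in $H$-trivial, $K_{\cal F}$-negative rays can never reach a Mori fibre structure or a numerically trivial configuration, so Proposition \ref{p_semi_ample_fibr} and Theorem \ref{t_main0} are unavailable as endgames; your iteration has no terminal state. The paper instead splits into cases: when $H$ is not big, Miyaoka's theorem gives algebraic integrability and bend-and-break produces the $H$-trivial invariant curve through a general point needed for Proposition \ref{p_semi_ample_fibr}; when $H$ is big, one (a) handles rays with two-dimensional locus by the divisorial contraction of \cite[Theorem 8.8]{CS20} and induction on the Picard number, after showing the exceptional divisor contracts to a point so that $A$ stays ample, (b) excludes surfaces from $\Null{H}$ by restricting the foliation to such an invariant surface, applying bend-and-break there and deriving a contradiction with Lemma \ref{l_irreducible}, and (c) contracts the remaining finitely many disjoint invariant $H$-trivial curves using the anti-ampleness of their normal bundle and Artin's contractibility theorem in the category of algebraic spaces, finally proving ampleness of the descended divisor by Nakai--Moishezon. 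None of these steps (the big/non-big dichotomy, the $\Null{H}$ analysis, the Artin contraction, the ampleness argument) appears in your proposal, and without them the big case is untouched; your use of Lemmas \ref{l_sing} and \ref{l_irreducible} as a device to ``combine curves into a covering family'' also misreads their role, which is to rule out configurations of intersecting invariant extremal curves inside $\Null{H}$, not to build fibrations.
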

\begin{proof} Let $H\coloneq K_{\cal F}+\Delta$. 
Assume first that $H$ is not big. In particular,  $K_{\cal F}$ is not pseudo-effective and  
Miyaoka's theorem (e.g. see \cite[Theorem 7.1]{brunella00}) implies that $\mathcal F$ is algebraically integrable. 
 By the bend and break (cf. \cite[Corollary 2.28] {Spicer20}) it follows that there exists a rational curve $\xi$ passing through a general point of $X$,  which is tangent to $\cal F$ and such that $H\cdot \xi=0$ (e.g. see the proof of \cite[Theorem 6.3]{Spicer20} for more details). 
By Proposition \ref{p_semi_ample_fibr} it follows that $H$ is semi-ample, as claimed.

\medskip 

Thus, we may assume that $H$ is big.  If $K_{\cal F}+\frac{1}{2}A+B$ is nef then $H$ is ample and there is nothing to prove.   So suppose that $K_{\cal F}+\frac{1}{2}A+B$ is not nef. By Lemma \ref{l_cone},  there exists a  $(K_{\cal F}+B)$-negative extremal ray $R=\mathbb R_+[C]$  such that $C$ is $\cal F$-invariant and  $(K_{\cal F}+\Delta)\cdot C=0$. Since $(\cal F,\Delta)$ is log canonical, it follows that $C$ is not contained in the support of $B$ and therefore $K_{\cal F}\cdot C<0$. Since $H$ is big, we have that  $\loc R\neq X$. 
By \cite[Corollary 8.5]{CS20}, we may  assume that $\cal F$ is canonical along $C$. 

Assume that $\loc R$ is a surface and let $\varphi\colon X \rightarrow X'$ be the birational contraction associated to $R$ and whose existence is guaranteed by \cite[Theorem 8.8]{CS20}. Note that $X'$ is $\mathbb Q$-factorial. 
Let $\Delta'\coloneqq \varphi_*\Delta=A'+B'$ where $A'\coloneqq \varphi_*A$   and $B'\coloneqq \varphi_*B\ge0$. 
Let $E$ be the exceptional divisor. We first show that $\varphi(E)$ is a closed point. Indeed, assume by contradiction that  $\xi\coloneq \varphi(E)$ is a curve and let $F$ be a valuation over $X'$ centred inside $\xi$. By the negativity lemma   (e.g. see \cite[Lemma 2.7]{CS20}) and \cite[Lemma 8.3]{CS20}, we have that $a(F,\mathcal F',B')\ge 0$. In particular, $(\mathcal F',B')$ is canonical along $\xi$ and \cite[Lemma 2.6]{CS20} implies that $F$ is invariant. Thus, by applying the negativity lemma again, we get that $a(F,\mathcal F',B')> 0$ and, in particular, $\cal F'$ is terminal along $\xi$. Thus, \cite[Lemma 2.9]{CS20} implies that $\cal F'$ is smooth along $\xi$ and, therefore,  there exits an $\cal F'$-invariant curve $T'$ passing through a general point $\eta$ of $\xi$. Note that $T'$ is distinct from $\xi$. Let $T$ be the strict transform of $T'$ in $X$. Then $\varphi^{-1}(\eta)\cap T\subset \sing \cal F$. In particular, since $A$ is ample, it follows that  $\sing \cal F$ contains a curve which intersects $A$, contradicting the fact that $(\cal F,A+B)$ is log canonical. Hence, we have shown that $\varphi(E)$ is a closed point and, in particular, it follows that $A'\coloneqq \varphi_*A$ is ample. 
Let $\cal F' \coloneqq \varphi_*\cal F$. Note that $(\cal F',\Delta')$ is log canonical and $K_{\cal F}+\Delta \sim_{\bb Q}  \varphi^*(K_{\cal F'}+\Delta')$ and so
$K_{\cal F}+\Delta$ is semi-ample provided $K_{\cal F'}+\Delta'$ is. 

Thus, by proceeding by induction on the Picard number of $X$, we may assume that for each $(K_{\cal F}+\Delta)$-trivial extremal ray $R$, we have that $\loc (R)$ is one dimensional. Let $R_1,\dots,R_q$ be all such extremal rays. 
 We claim that they are pairwise disjoint. Indeed, suppose not. Then, after possibly reordering, we may assume that there exist two curves $C_1$ and $C_2$ such that $[C_i]\in R_i$ for $i=1,2$ and $C_1\cap C_2\neq \emptyset$.  
By \cite[Remark 2.31]{CS20}, we have that $C_1$ and $C_2$ are $\cal F$-invariant. 
Since $(\cal F,\Delta)$ is log canonical, as in the proof of \cite[Lemma 4.7]{CS23b}, it follows that $C_1$ and $C_2$ are not contained in the support of $\Delta$ and, therefore, $K_{\cal F}\cdot C_i<0$. 
Thus, 
by Lemma \ref{l_irreducible}, it follows that there exist an $\mathcal F$-invariant curve $\xi_x$ in $X$ passing through a general point $x\in X$ and rational numbers $a,b\ge 0$ such that $[a C_1+b C_2]=[\xi_x]$ in $\overline{\rm NE}(X)$. In particular, $H\cdot \xi_x=(K_{\cal F}+\Delta)\cdot \xi_x=0$ and, therefore,  $H$ is not big, a contradiction. Thus, our claim follows. 

\medskip

Let $ {\rm Null}(H)$ be the exceptional locus of $H$ (e.g. see \cite[Section 2.12]{CS20}). Since $H$ is big, it follows that $ {\rm Null}(H)\neq X$. Assume by contradiction that  ${\rm Null}(H)$ contains a surface $S$. Let $\nu\colon S^{\nu}\to S$ be the normalisation of $S$.  By \cite[Corollary 4.9]{CS23b}, $S$ is $\cal F$-invariant and by 
\cite[Proposition-Definition 3.7]{CS23b}, we may write
\[
\nu^*(K_\cal F+\Delta)=K_{\mathcal F_{S^{\nu}}}+\Delta_S
\] 
where $\mathcal F_{S^{\nu}}$ is the restricted foliation on $S^{\nu}$ and  $\Delta_S\ge 0$ is a $\mathbb Q$-divisor on $S^{\nu}$. By the bend and break (cf. \cite[Corollary 2.28] {Spicer20}) it follows that there exists a rational curve $\xi$ passing through a general point of $S^{\nu}$, which is tangent to $\cal F_{S^{\nu}}$ and such that $(K_{\mathcal F_{S^{\nu}}}+\Delta_S)\cdot \xi=0$. In particular, if $\xi'=\nu(\xi)$ then $(K_{\cal F}+\Delta)\cdot \xi' =0$ and 
 $S\cdot \xi'<0$.  Thus, there exists an extremal ray $R$ of $\overline{{\rm NE}}(X)$, which is $(K_{\cal F}+\Delta)$-trivial, and such that $\loc (R)\subset S$.
Moreover, it follows that $\cal F_{S^\nu}$ is algebraically integrable and, therefore,  there exists a birational morphism of normal surfaces $h\colon T\to S^\nu$  such that the foliation $h^{-1}\cal F_{S^{\nu}}$ is induced by a morphism $g\colon T\to B$ where $B$ is a smooth curve. In particular, if $F$ is the general fibre of $f$ then $h_*F$ is numerically equivalent to $\xi$.  
By \cite[Remark 2.31]{CS20}, it follows that if $C_1$  is a curve contained in  $\loc (R)$ then $C_1$ is not contained in the singular locus of $\cal F$ and $C_1$ is $\cal F$-invariant. Thus, if $C'_1$ is the strict transform of $\nu^{-1}(C_1)$ in $T$, then $C'_1$ is contained in a fibre of $f$. On the other hand, 
since $\dim \loc (R)=1$, it follows that, for any fibre $F_0$ of $f$, we have that  $[h_*F_0]$ is not contained in $R$. Thus, $C_1$ intersects a curve $C_2$ which is $\cal F$-invariant and such that $C_2$ is not contained in $\loc(R)$ and $(K_{\cal F}+\Delta)\cdot C_2=0$. As above, since $(\cal F,\Delta)$ is log canonical, it follows that $K_{\cal F}\cdot C_i<0$ for $i=1,2$.  
 Thus,  Lemma \ref{l_irreducible} yields a contradiction and ${\rm Null}(H)$ does not contain any surface.

 Let 
\[
\Sigma \coloneq  \bigcup_{i=1}^q \loc (R_i).
\]
 By \cite[Theorem 8.8]{CS20}, it follows that each component of $\Sigma$ can be contracted and so by Artin's theorem  \cite[Theorem 6.2]{Artin70} there exists a morphism $\psi\colon X\to Y$ in the category of algebraic spaces such that 
$\exc \psi = \Sigma$ 
and $H=\psi^*H_Y$ for some $\mathbb Q$-Cartier $\mathbb Q$-divisor $H_Y$ on $Y$. If $H_Y$ is ample, then $H$ is semi-ample and we are done. 

Assume now, by contradiction, that $H_Y$ is not ample.  Since ${\rm Null}(H)$ does not contain any surface, it follows that $H_Y|_T$ is big for any surface $T$ on $Y$ and, by Nakai-Moishezon theorem, there exists an extremal ray $R_Y$ of $\overline{{\rm NE}(Y)}$ which is $H_Y$-trivial. Thus, there exists a $H$-trivial extremal ray $R_X$ such that $[\psi_*\xi] \in R_Y$ for all $\xi\in R_X$. By construction, we have that $\loc (R_X)\subset \Sigma = \exc \psi$, a contradiction. 
\end{proof}

\section{Numerically trivial log canonical foliated  pairs }

The goal of this section is to prove the following: 

\begin{theorem}\label{t_main}
Let $X$ be a normal projective $\mathbb Q$-factorial klt variety, let $\mathcal F$ be a rank one foliation on $X$ and let $\Delta \ge 0$ be a $\mathbb Q$-divisor such that $(\mathcal F, \Delta)$ is log canonical and 
$K_{\mathcal F}+\Delta \equiv 0$.  

Then $K_{\mathcal F}+\Delta \sim_{\mathbb Q}0$.
\end{theorem}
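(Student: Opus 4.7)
\emph{Proof plan.} The strategy is to split the proof according to whether $\mathcal{F}$ is algebraically integrable. The algebraically integrable case is handled directly by Proposition~\ref{p_semi_ample_fibr}; the non-algebraically integrable case reduces, via Miyaoka, to a purely positivity-theoretic statement about numerically trivial foliated line bundles, which is the main obstacle.

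Suppose first that $\mathcal{F}$ is algebraically integrable. Since $\mathcal{F}$ has rank one, a general leaf is a closed algebraic curve, so there exists an $\mathcal{F}$-invariant curve $\xi$ passing through a general point of $X$. As $H \coloneq K_{\mathcal{F}} + \Delta \equiv 0$ is nef and satisfies $H \cdot \xi = 0$, Proposition~\ref{p_semi_ample_fibr} applies and yields that $H$ is semi-ample. A semi-ample $\mathbb{Q}$-divisor that is numerically trivial is $\mathbb{Q}$-linearly trivial, so $H \sim_{\mathbb{Q}} 0$ as required.

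Suppose now that $\mathcal{F}$ is not algebraically integrable. By Miyaoka's theorem (\cite[Theorem 7.1]{brunella00}, as used in the proof of Theorem~\ref{t_bpf}), $K_{\mathcal{F}}$ is pseudo-effective. Fixing an ample divisor $A$ on $X$ and intersecting $K_{\mathcal{F}} + \Delta \equiv 0$ with $A^{n-1}$, we obtain
\[
K_{\mathcal{F}} \cdot A^{n-1} + \Delta \cdot A^{n-1} = 0,
\]
where both summands are non-negative by pseudo-effectivity of $K_{\mathcal{F}}$ and effectivity of $\Delta$ respectively. Hence $\Delta \cdot A^{n-1} = 0$, which together with $\Delta \ge 0$ forces $\Delta = 0$; consequently $K_{\mathcal{F}} \equiv 0$.

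It remains---and this is the main obstacle---to promote the numerical triviality $K_{\mathcal{F}} \equiv 0$ to $\mathbb{Q}$-linear triviality, i.e.\ to rule out that $K_{\mathcal{F}}$ represents a non-torsion class in $\mathrm{Pic}^0(X)$. My plan is to exploit that $T_{\mathcal{F}} \cong \mathcal{O}_X(-K_{\mathcal{F}})$ is a saturated numerically flat rank one subsheaf of $T_X$ on the klt variety $X$. Passing to a resolution of singularities and keeping track of the foliated structure, one would appeal to a Simpson-type / Hodge-theoretic input on numerically flat foliated line bundles---in the spirit of \cite[Theorem 5.1]{druel2024numericallyflatfoliationsholomorphic}---to deduce that the first Chern class of such a subsheaf of the tangent sheaf is $\mathbb{Q}$-torsion. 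Combined with $K_{\mathcal{F}} \equiv 0$, this yields $K_{\mathcal{F}} \sim_{\mathbb{Q}} 0$, completing the proof.
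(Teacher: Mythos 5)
Your reductions are fine as far as they go: the algebraically integrable case does follow from Proposition~\ref{p_semi_ample_fibr} exactly as you say (and semi-ample plus numerically trivial gives $\sim_{\mathbb Q}0$), and in the non-integrable case the contrapositive of Miyaoka's theorem gives pseudo-effectivity of $K_{\mathcal F}$, whence $\Delta\cdot A^{n-1}=0$ forces $\Delta=0$ and $K_{\mathcal F}\equiv 0$. This is close in spirit to the paper's Case~1, which instead splits off ``$\Delta\neq 0$ or $\mathcal F$ not canonical'' and uses \cite[Corollary 3.8]{lpt18} to get algebraic integrability there; your split on integrability is a legitimate variant (and it even shows, implicitly, that in your remaining case $\mathcal F$ has canonical singularities, a fact you would need but never record).

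The genuine gap is the final step, which is the entire content of the theorem: you do not prove that $K_{\mathcal F}\equiv 0$ implies $K_{\mathcal F}\sim_{\mathbb Q}0$, you defer it to ``a Simpson-type / Hodge-theoretic input \ldots in the spirit of \cite[Theorem 5.1]{druel2024numericallyflatfoliationsholomorphic}''. That reference is, as the introduction of this paper points out, precisely an alternative proof of Theorem~\ref{t_main0}; invoking it as a black box is citing the statement to be proved, not proving it. Moreover the asserted black-box principle --- that the first Chern class of a saturated, numerically trivial rank one subsheaf of $T_X$ is automatically $\mathbb Q$-torsion --- is not an off-the-shelf fact: it is exactly what requires work, and the paper's Case~2 spends several pages establishing it, via the Albanese map and its Stein factorisation (to write $mK_{\mathcal F}$ as a pullback from $Z$), the dichotomy transverse/tangent to the fibres, a relative index one cover and functorial resolution, the structure theory of smooth rank one foliations with trivial canonical class on the general fibre \cite{MR2928843} to reduce to a family of abelian varieties, semi-stable reduction, and finally a variation-of-Hodge-structure argument (Deligne canonical extension, semi-positivity of the curvature, vanishing of local exponents as in \cite{kawamata81}, and \cite[Corollaire 4.2.8.iii.b]{MR498552}) to conclude that the relevant line bundle on $Z$ is torsion. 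None of this, nor any substitute for it, appears in your proposal, so the main case remains unproved.
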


\begin{proof}
We prove the theorem using a case by case analysis.

{\bf Case 1:  $\Delta \neq 0$ or $\mathcal F$ does not have canonical singularities.} We first show that, in both cases, 
$\cal F$ is algebraically integrable. 
 If $\Delta\neq 0$, then  $K_{\mathcal F}$ is not pseudo-effective and 
Miyaoka's theorem (e.g. see \cite[Theorem 7.1]{brunella00}) implies that $\mathcal F$ is algebraically integrable.
Assume now that $\Delta=0$ and 
$\cal F$ does not have canonical singularities. 
Then \cite[Corollary 3.8]{lpt18} implies that $\cal F$ is uniruled  (note that while {\it loc. cit.} is stated for smooth varieties, the proof applies equally well in our setting) and our claim follows also in this case. 
We may then conclude by Proposition \ref{p_semi_ample_fibr}.

\medskip

{\bf Case 2: $\Delta = 0$ and $\mathcal F$ has canonical singularities.}  
let ${\rm Alb}\colon X \to  A$ be the Albanese morphism
(e.g. see \cite[Lemma 8.1]{Kawamata85a}) and let 
\[
{\rm Alb}\colon X \xrightarrow{a} Z \to A
\] be its Stein factorisation.  
Since ${\rm Pic}^0(X) = {\rm Pic}^0(A)$, if $m>0$ is a sufficiently positive integer then $mK_{\mathcal F}$ is Cartier and there exists a line bundle
$L$ on $Z$ such that $\cal O_X(mK_{\mathcal F}) = a^*L$.

Either $\mathcal F$ is generically transverse to the fibres of $a$, or $\mathcal F$ is tangent to the fibres of $a$ (equivalently, $T_{\mathcal F} \subset T_{X/Z}$).

\medskip

{\bf Case 2.a: $\mathcal F$ is generically transverse to the fibres of $a$.} In this case, the composition
\[{\rm Alb}^*\Omega^1_A \to a^*\Omega^1_Z\to  \Omega^1_X \to \mathcal O_X(K_{\mathcal F})\]
is non-zero. Since ${\rm Alb}^*\Omega^1_A \cong \mathcal O_X^{\dim A}$ we see that $H^0(X, \mathcal O(K_{\mathcal F})) \neq 0$ and we may conclude.

\medskip

{\bf Case 2.b: $\mathcal F$ is tangent to the fibres of $a$, i.e., $T_{\mathcal F} \subset T_{X/Z}$.}
We denote by $X_z \coloneq a^{-1}(z)$ the fibre of $a$ at $z\in Z$ and, for a general  $z\in Z$, we denote by $\cal F_z$ the restricted foliation on $X_z$ (cf. \cite[Proposition-Definition 3.12]{CS23b}). 


Choose $M\in  {\rm Pic}^0(A)$ such that $M^{\otimes m} = L$.
We form the relative index one cover associated to $K_{\mathcal F}$ as follows.
Consider the sheaf 
\[\mathcal A \coloneq \bigoplus_{i = 0}^{m-1} \mathcal O_X(-iK_{\mathcal F}) [\otimes] a^*M^{\otimes i}\]
where $[\otimes]$ denotes the reflexive tensor product.
Using the isomorphism $\mathcal O_X(-mK_{\mathcal F})\otimes M^{\otimes m} \to \mathcal O_X$ we equip $\mathcal A$ with the structure of an
$\mathcal O_X$-algebra.  Let $X' \coloneq {\rm Spec}_X\mathcal A$ and let $r\colon X' \to X$ be the natural morphism.

Note that $r\colon X' \to X$ is quasi-\'etale when restricted to the generic fibre, and in particular, the ramification of $r\colon X' \to X$ is supported on $\mathcal F$-invariant divisors.  
By \cite[Lemma 3.4]{Druel19}, cf. \cite[Proposition 2.20]{CS20}, we see that $K_{r^{-1}\mathcal F} = r^*K_{\mathcal F}$ and  therefore it suffices to prove that $K_{r^{-1}\mathcal F} \sim_{\mathbb Q} 0$. By \cite[Lemma 2.8]{CS20} we have that $r^{-1}\cal F$ has canonical singularities.
Thus, up to replacing $(X, \mathcal F)$ by $(X', r^{-1}\mathcal F)$ we may freely assume that $\mathcal O_X(K_{\mathcal F}) \simeq a^*L$ where $L$ is a line bundle, and in particular, $K_{\mathcal F_z} \sim 0$ for general $z \in Z$, where $\mathcal F_z$ is the restricted foliation on $X_z :=a^{-1}(z)$.  Since $K_{\mathcal F_z} \sim 0$, we have that $\cal F_z$ is generated by a global vector field, which we will denote $\delta_z$.

Let $\mu\colon \widehat{X} \to X$ be a functorial resolution of singularities (cf.   \cite[Notation 4.5]{MR2581247}). 
From \cite[Corollary 4.7]{MR2581247} we deduce that $K_{\mu^{-1}\mathcal F} = \mu^*K_{\mathcal F}$, so up to replacing $X$
by $\widehat{X}$ we may freely assume that $X$ is smooth.  

\medskip 

{\bf Case 2.b.i: A component of $\sing \mathcal F$ dominates $Z$.}
Let $S$ be a component of $\sing \mathcal F$ which dominates $Z$.  By \cite[\S 4.1]{bm16}
we see that $K_{\mathcal F}|_S$ is semi-ample.  Since $\mathcal O_S(K_{\mathcal F}) \simeq (a|_S)^*L$ we deduce that 
$L$ is torsion, and we may conclude.

\medskip 

{\bf Case 2.b.ii: $\sing \mathcal F_z = \emptyset$ for a general point $z \in Z$.}
In this case, by \cite[Remark 1.5 and Theorem 3.2]{MR2928843} up to an \'etale cover, either $X_z$ is a suspension over an abelian variety, or
 $X_z \cong T_z\times F_z \to X_z$ where $F_z$ is an abelian variety and $T_z$ admits no global vector fields.
In either case, (up to an \'etale cover) there is a morphism $p\colon X_z \to F_z$ where $F_z$ is an abelian variety and the pushforward of $\delta_z$ is a global vector field on $F_z$.

Thus, after replacing $X$ by a finite cover which is ramified only on fibres of $X \to Z$, we may assume that we have a morphism $f\colon X \to F$ over $Z$ such that a general fibre of $b\colon F \to Z$ is an abelian variety
and there exists a rank one foliation $\mathcal G$ on $F$ such that $\mathcal G_z$ is defined by a global vector field for general $z \in Z$. In particular, we have a non-trivial natural map $T_{\cal F}\to f^*T_{\cal G}$ and therefore there exists a divisor $B\ge 0$ such that $K_{\cal F}=f^*K_{\cal G}+B$. Since $K_{\cal G}$ is pseudo-effective, it follows that $K_{\mathcal F} \sim f^*K_{\mathcal G}$.  
Thus, we may freely replace $X$ by $F$ and  we may assume that a general fibre of $X \to Z$ is an abelian variety.

Next, note that if $C \subset Z$ is a general complete intersection curve, then the natural map ${\rm Pic}^0(Z) \to {\rm Pic}^0(C)$ is injective, and so to show that $L$ is torsion it suffices to show that $L\vert_C$ is torsion.
Thus we may freely replace $Z$ by $C$ and $X$ by $a^{-1}(C)$ and so may assume that $Z$ is a curve.
Next, let us consider a semi-stable reduction of $X \to Z$ which is guaranteed to exist by \cite{kkms73}, i.e., a diagram

\begin{center}
\begin{tikzcd}
X' \arrow[r, "\alpha"] \arrow[d] & X \arrow[d] \\
Z' \arrow[r, "\sigma"] & Z

\end{tikzcd}
\end{center}
where $\sigma\colon Z' \to Z$ is finite and $\alpha\colon X' \to X$ is the composition of a resolution of singularities of $X\times_ZZ'$ together with the natural projection $X\times_ZZ' \to X$.
By taking our resolution of singularities to be functorial and noting that the ramification of $X' \to X$ 
is $\alpha^{-1}\mathcal F$-invariant, arguing as above, we again see that $K_{\alpha^{-1}\mathcal F} = \alpha^*K_{\mathcal F}$.  Thus we may freely replace $X$ by $X'$ and so may assume that $X \to Z$ is semi-stable.

If we push forward the morphism $\Omega^1_{X/Z} \to \mathcal O_X(K_{\mathcal F})$ along $a$
we get a generically surjective morphism $a_*\Omega^1_{X/Z} \to L$.
If we let $U \subset Z$ be an open subset such that $X_U:=a^{-1}(U) \to U$ is a smooth family of abelian varieties, and note that we have splitting $a_*\Omega^1_{X/Z}|_U \simeq L|_U \oplus M$ for some vector bundle $M$ on $U$. 
Let $D \coloneq X \setminus a^{-1}(U)$ and note that $(X, D)$ is an snc pair.
Since every component of $D$ is vertical with respect to $a$, each component of $D$ is $\cal F$-invariant and therefore we  have a morphism $\Omega^1_{X/Z}(\log D) \to \cal O_X(K_{\mathcal F})$.

Let $H$ be the Deligne canonical extension of 
$R^1a_*\mathbb C_{X_U}$.
By \cite[Corollary, pg. 130]{MR756849} there is a decreasing filtration on $H$, extending the Hodge filtration on $R^1a_*\mathbb C_{X_U}$, such that the bottom piece of this filtration is $a_*\Omega^1_{X/Z}(\log D)$.

We next note that 
the pushforward of $\Omega^1_{X/Z}(\log D) \to \mathcal O_X(K_{\mathcal F})$ gives a generically surjective morphism $a_*\Omega^1_{X/Z}(\log D) \to L$.  

We will now show that $L$ is a local system.
As in \cite[Proof of Lemma]{MR510945} the natural Hermitian metric on $R^1a_*\mathbb C_{X_U}$ canonically determines Hermitian metrics on  $a_*\Omega^1_{X_U/U}$ and $L|_U$, such that curvature form with respect to these metrics is semi-positive.  Denote by $h_L$ the Hermitian metric on $L|_U$ and denote by $\Theta$ the corresponding curvature form 
on $L|_U$.
We then have \[\deg L = \int_{U} \Theta+\sum_{P \in Z \setminus U} a_P\]
where $a_P$ is the local exponent of $L$ at $P$ (see for instance \cite[Lemma 21]{kawamata81}).
We note that $a_P \ge  0$.  Indeed, as observed in \cite[Paragraph before Lemma 21]{kawamata81}
$a_P$ is determined by the following estimate $h_L(s_P, s_P) = O(|t|^{-2a_P}(\log |t|)^{-2b_P})$
where $t$ is a local coordinate on a neighbourhood of $P$ and $s_P$ is generator of $L$ in a neighbourhood of $P$.  Recall (from the local description of the canonical extension) that a section of
 $R^1a_*\mathbb C_{X_U}$ (resp. $a_*\Omega^1_{X_U/U}$) extends to $H$ (resp. $a_*\Omega^1_{X/Z}(\log D)$) provided it has logarithmic growth near $P$.  It follows that 
the local section of $s_P$ has at least logarithmic growth near $P$.

Since $\Theta$ is semi-positive and $\deg L = 0$ we deduce that in fact $\Theta = 0$ and $a_P = 0$.
Since $\Theta = 0$, 
\[L|_{U} \subset a_*\Omega^1_{X_U/U} \subset R^1a_*\mathbb C_{X_U}\] is a local subsystem of $R^1a_*\mathbb C_{X_U}$.
Since $a_P = 0$ the local monodromy of $L_U$ around $P$ is trivial.  This implies that in fact $L$ is a local system. 
By \cite[Corollaire 4.2.8.iii.b]{MR498552}, some power $L|_U^{\otimes m}$ is the trivial local system.   Since $a_P = 0$, the monodromy around $P$ is trivial and so it follows that in fact $L^{\otimes m}\simeq 0$, as required.
\end{proof}

Theorem \ref{t_main} has the following interesting Corollary.  We thank F. Bernasconi for pointing this out to us.

\begin{corollary}
\label{cor_hyperbolicity}
Let $X$ be a normal projective $\mathbb Q$-factorial klt variety, let $\mathcal F$ be a rank one foliation on $X$ such that $\mathcal F$ is log canonical and  
$K_{\mathcal F}\equiv 0$.

Then, for a general point $x \in X$ there exists a holomorphic map $f\colon \mathbb C \to X$ such that 
$x \in f(\mathbb C)$ and the image of $f$ is tangent to $\mathcal F$.
\end{corollary}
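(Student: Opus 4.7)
The starting point is Theorem \ref{t_main} applied with $\Delta=0$, which yields $K_{\mathcal F}\sim_{\mathbb Q}0$; fix an integer $m\ge 1$ with $mK_{\mathcal F}\sim 0$. Following the construction in Case 2.b of the proof of Theorem \ref{t_main}, I would form the associated cyclic index-one cover
\[
r\colon X'\coloneq \operatorname{Spec}_X \Bigl(\bigoplus_{i=0}^{m-1}\mathcal O_X(-iK_{\mathcal F})\Bigr)\longrightarrow X,
\]
which is finite and quasi-\'etale with ramification only along $\mathcal F$-invariant divisors. Then \cite[Lemma 3.4]{Druel19} gives $K_{r^{-1}\mathcal F}=r^*K_{\mathcal F}\sim 0$, so $T_{r^{-1}\mathcal F}\simeq \mathcal O_{X'}$; any generator yields a global section $\delta\in H^0(X',T_{r^{-1}\mathcal F})\subset H^0(X',T_{X'})$ which, on the smooth locus of $r^{-1}\mathcal F$, is a nowhere-vanishing holomorphic vector field generating $r^{-1}\mathcal F$.

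Next, $\delta$ should integrate to a holomorphic $\mathbb C$-action $\Phi\colon \mathbb C\times X'\to X'$. For a smooth projective variety this is classical, since $\operatorname{Aut}^0(X')$ is a complex algebraic group with Lie algebra $H^0(X',T_{X'})$ and $\delta$ exponentiates to the corresponding one-parameter subgroup; for the normal projective $X'$ at hand one reduces to the smooth case by choosing a functorial, hence automorphism-equivariant, resolution of singularities. Given $\Phi$, pick any general $x'\in X'$ at which $X'$ and $r^{-1}\mathcal F$ are smooth and $\delta(x')\neq 0$; the orbit map $g\colon \mathbb C\to X'$, $g(t)\coloneq \Phi_t(x')$, is then a non-constant holomorphic map with $g(0)=x'$ and image tangent to $r^{-1}\mathcal F$. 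The composition $f\coloneq r\circ g\colon \mathbb C\to X$ therefore passes through the general point $x\coloneq r(x')\in X$ and has image tangent to $\mathcal F$, as required.

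The principal obstacle is this integration step on the (possibly singular) variety $X'$: one must justify that $\delta$, regarded as a section of the reflexive sheaf $T_{X'}$, really determines a global holomorphic $\mathbb C$-action on $X'$ and not merely a formal or bimeromorphic one. A functorial equivariant resolution $\mu\colon \widetilde X\to X'$, combined with the classical completeness of holomorphic vector fields on compact complex manifolds, should suffice: on $\widetilde X$ the lift of $\delta$ is a regular section of the foliated tangent sheaf, integrates to a $\mathbb C$-action there, and descends through $\mu$ to $X'$.
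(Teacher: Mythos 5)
Your proposal is correct and follows essentially the same route as the paper: reduce to $K_{\mathcal F}\sim 0$ via Theorem \ref{t_main} and the index one cover, observe that $\mathcal F$ is then generated by a global vector field, and produce the entire curve as an orbit of the associated flow (exponential map of the automorphism group), using a functorial resolution to handle the singularities of the cover. The only cosmetic difference is ordering: the paper passes to the functorial resolution before exponentiating (so it works directly on a smooth $X$), whereas you integrate on the resolution and push the orbit back down, which works just as well and does not even require descending the flow itself.
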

\begin{proof}
By Theorem \ref{t_main} $K_{\mathcal F}\sim_{\mathbb Q} 0$.  So, up to replacing $X$ by the index one cover associated to $K_{\mathcal F}$, we may assume that $K_{\mathcal F} \sim 0$.
Moreover, up to replacing $X$ by a functorial resolution of singularities, we may assume that $X$ is smooth.

Since $K_{\mathcal F} \sim 0$ we see that  $\mathcal F$ is generated by a global vector field $v \in H^0(X, T_X)$.  Since $H^0(X, T_X)$ is the Lie algebra of $\Aut (X)$, 
for a general point $x \in X$, we have the exponential map $\exp_x\colon H^0(X, T_X) \to X$ such
that $\exp_x(0) = x$.  We take $f\colon \mathbb C \to X$ to be the restriction of $\exp_x$ to the subspace $\mathbb Cv \subset H^0(X, T_X)$. 
\end{proof}

\bibliography{math.bib}
\bibliographystyle{alpha}
\end{document}